\newtheorem{theorem}{Theorem}[section]
\newtheorem{proposition}[theorem]{Proposition}%
\newtheorem{corollary}[theorem]{Corollary}%
\newtheorem{lemma}[theorem]{Lemma}%
\newcommand{\ACKNO}[1]{\noindent\textbf{Acknowledgments.} #1}
\def\erf{\mathop{\operator@font erf}\nolimits}
\def\erfc{\mathop{\operator@font erfc}\nolimits}
\def\argmax{\mathop{\operator@font argmax}\nolimits}
\def\prob{\mathop{\operator@font Prob}\nolimits}
\begin{document}

\title{\Large Small random instances of the stable roommates problem}

\author{Stephan Mertens\\[2ex]
  {\small Otto-von-Guericke Universität Magdeburg, Germany and Sante Fe
  Institute, USA} \\
  {\small mertens@ovgu.de}}

\date{\small February 20, 2015}

\maketitle

\begin{abstract}
  Let $p_n$ denote the probability that a random instance of the
  stable roommates problem of size $n$ admits a solution. We derive an
  explicit formula for $p_n$ and compute exact values of $p_n$ for
  $n\leq 12$.
\end{abstract}

\section{Introduction}
\label{sec:intro}

Matching under preferences is a topic of great practical importance,
deep mathematical structure, and elegant algorithmics
\cite{manlove:book,gusfield:irving:book}. A paradigmatic example is
the stable roommates problem \cite{gale:shapley:62}. Consider an even
number $n$ of participants. Each of the participants ranks all the
others in strict order of preference. A matching is a set of $n/2$
disjoint pairs of participants. A matching is stable if there is no
pair of unmatched participants who both prefer each other to their
partner in the matching. Such a pair is said to block the matching.
The stable roommates problem is to find a stable matching. The name
originates from the problem to assign students to the double bedroomes
of a dormitory. Another application is the formation of cockpit crews
from a pool of pilots.

An instance of the stable roommates problem is defined by a preference
table, in which each participant ranks all other $n-1$ participants,
most preferred first. For technical reasons we will assume that each
participant puts himself at the very end of his preference list. Here
are two examples for $n=4$:
\begin{equation}
  \label{eq:examples}
  \text{(A)}\quad
    \begin{array} {rrrrr}
    1: & 4 & \mathbf{2} & 3 & 1\\
    2: & 3 & 4 & \mathbf{1} & 2\\
    3: & 1 & \mathbf{4} & 2 & 3\\
    4: & \mathbf{3} & 2 & 1 & 4
  \end{array}
  \qquad\qquad
  \text{(B)}\quad
  \begin{array} {rrrrr}
    1: & 3 & 2 & 4 & 1\\
    2: & 1 & 3 & 4 & 2\\
    3: & 2 & 1 & 4 & 3\\
    4: & 1 & 2 & 3 & 4
  \end{array}
\end{equation}
In (A), the marked matching $(1,2)(3,4)$ is stable. In (B), there is
no stable matching: whoever is matched with $4$ can always form a
blocking pair with someone else.
Example (B) illustrates the fact that not all instances of the
stable roommates problem have a solution. Let $p_n$ denote the
probabilty that a random instance, chosen uniformely from all possible
instances of size $n$, admits a solution.  Our examples shows that $0
< p_4 <1$. The exact value is $p_4=26/27$. It has been computed by
Pittel \cite{pittel:93a} more than 20 years ago. No other values of
$p_n$ are known exactly. Numerical simulations
\cite{mertens:roommates} suggest that $p_n$ is a monotonically
decreasing function of $n$ that asymptotically decays like $n^{-1/4}$.

In this paper we derive an explicit formula for $p_n$ that we use
to compute exact values of $p_n$ for $n\leq 12$. And we 
discuss a generalization of this approach for odd
values of $n$.

\section{Stable Permutations}
\label{sec:stable-permutations}

A matching of size $n$ can be interpreted as a permutation $\pi$ of
$\{1,\ldots,n\}$ that is completely composed of 2-cyles.  An obvious
generalization is to allow arbitrary permutations $\pi$, but for that
one needs to extend the definition of stability.  A permutation $\pi$
is called stable if it satisfies the two following conditions:
\begin{subequations}
\begin{equation}
  \label{eq:stable-partition-1}
  \mbox{$\forall i: i$ does not prefer $\pi(i)$ to $\pi^{-1}(i)$}  
\end{equation}
\begin{equation}
  \label{eq:stable-partition-2}
  \mbox{$i$ prefers $j$ to $\pi(i) \Rightarrow j $ prefers $\pi(j)$ to $i$}
\end{equation}
\end{subequations}
This definition includes permutations with fixed points. This is the
reason why we've added each participant to the very end of his own
preference list. But note that \eqref{eq:stable-partition-2} rules out
that a stable permutation can have more than one fixed point.

For permutations composed of $2$-cycles (matchings) condition
\eqref{eq:stable-partition-1} is trivially satisfied and condition
\eqref{eq:stable-partition-2} reduces to the usual ``no blocking
pairs'' condition. Condition \eqref{eq:stable-partition-1} enforces
each cycle of length $\geq 3$ to have a monotonic rank ordering: every
member $i$ prefers his predecessor $\pi^{-1}(i)$ to his successor
$\pi(i)$, and condition \eqref{eq:stable-partition-2} prevents any
member of the cycle to leave the cycle.

The significance of stable permutations for the stable roommates
problem arises from the following facts, proven by Tan \cite{tan:91}:
\begin{enumerate}
\item Each instance of the stable roommates problem admits at least
  one stable permutation.
\item \label{fact2} If $\pi$ is a stable permutation for a roommates
  instance that contains a cycle $C=(v_1,v_2,\ldots,v_{2m})$ of even
  length, we can get two different stable permutations by replacing $C$ by the $2$-cycles
  $(v_1,v_2),\ldots,(v_{2m-1},v_{2m})$ or by
  $(v_2,v_3),\ldots,(v_{2m},v_{1})$.
\item If $C$ is an odd-length cycle in \emph{one} stable permutation
  for a given roommates instance, then $C$ is a cycle in \emph{all}
  stable permutations for that instance.
\end{enumerate}
These facts establish the \emph{cycle type} of stable permutations as
certificate for the existence of a stable matching:  An instance of
the stable roommates problem is solvable if and only if the instance
admits a stable permutation with no odd cycles.

Consider again the two examples from the previous section.  One can
easily check that the permutation $(1,2,3)(4)$ is a stable permutation
for $(B)$. Since it contains the odd cycle $(1,2,3)$, (B) admits no
stable matching. The permutation $(1,3,4,2)$ is stable for
(A). According to fact \ref{fact2}, its $4$-cycle can be replaced by
$(1,3)\,(4,2)$ or by $(3,4)\,(1,2)$, which are in fact both stable
matchings.

\section{A Formula for $\mathbf{p_n}$}
\label{sec:theory}

The facts proven by Tan allow us to derive an explicit formula for the
probability $p_n$. The underlying ideas have already been
discussed more or less in \cite{pittel:93a}, but the formulas
\eqref{eq:Pn-general} and \eqref{eq:complement-Pn-general} haven't
been published before. We start with an integral representation for
$P(\pi)$, the probability that a permutation $\pi$ is stable.
\begin{proposition} \label{the:PnPi}
  Let $\pi$ be a permutation of $\{1,\ldots,n\}$ and let $F_\pi =
  \{i:i=\pi(i)\}$ denote the fixed points and $M_\pi = \{i:
  \pi(i)=\pi^{-1}(i)\neq i\}$ the elements in two cycles of $\pi$. The
  probability that $\pi$ is a stable permutation for a random instance
  of the stable roommates problem is given by
\begin{equation}
  \label{eq:PnPi}
  P(\pi) = \int_0^1 \mathrm{d}^nx \, \prod_{(i, j> i)\not\in D_\pi} (1-x_jx_i)
  \prod_{i\not\in M_\pi \cup F_\pi} x_i \prod_{i\in F_\pi}\delta(x_i-1)\,,
\end{equation}
where integration is over the $n$-dimensional unit cube and
\begin{equation}
  \label{eq:def-Dpi}
  D_\pi = \{(i,j) : i\neq j\,,\,\,i = \pi(j) \vee j = \pi(i) \}
\end{equation}
is the set of pairs of elements that are cyclic neighbors in $\pi$.
\end{proposition}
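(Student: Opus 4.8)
The plan is to translate each of the two stability conditions into an explicit probabilistic event and then encode the relevant probabilities as an integral over auxiliary uniform variables $x_1,\ldots,x_n\in[0,1]$. The key observation is that a random preference table is generated by having each participant $i$ assign independent uniform ranks to the other participants; equivalently, one can think of the relative order in which $i$ places any two competitors as an independent fair comparison. The variable $x_i$ should be interpreted as the (normalized) rank that $i$ assigns to $\pi(i)$, i.e. the ``quality threshold'' of $i$'s assigned partner: conditioning on $x_i$, the probability that $i$ prefers any other specific person $j$ to $\pi(i)$ is simply $x_i$, and the events ``$i$ prefers $j$ to $\pi(i)$'' for distinct competitors $j$ become conditionally independent. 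This conditioning is exactly what decouples the global stability requirement into a product over pairs, which is why an integral representation exists at all.

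First I would fix $\pi$ and write $P(\pi)$ as an expectation over the threshold variables. For each $i\notin M_\pi\cup F_\pi$ (an element of a cycle of length $\geq 3$), condition \eqref{eq:stable-partition-1} demands that $i$ prefer its predecessor $\pi^{-1}(i)$ to its successor $\pi(i)$; conditioned on $x_i$ this is an event of probability $x_i$, which accounts for the factor $\prod_{i\notin M_\pi\cup F_\pi}x_i$. For a fixed point $i\in F_\pi$, the partner is $i$ itself sitting at the very end of the list, so there is no room below it and the threshold is forced to $1$; this is the role of the $\delta(x_i-1)$ factors. Elements $i\in M_\pi$ lie in $2$-cycles, where \eqref{eq:stable-partition-1} is trivially satisfied (predecessor and successor coincide), so they contribute no single-site factor.

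Next I would handle the no-blocking condition \eqref{eq:stable-partition-2}. For an unordered pair $\{i,j\}$ that is \emph{not} a cyclic-neighbor pair (that is, $(i,j)\notin D_\pi$), stability forbids the simultaneous event ``$i$ prefers $j$ to $\pi(i)$ \emph{and} $j$ prefers $i$ to $\pi(j)$.'' Conditioned on the thresholds, these two sub-events are independent with probabilities $x_i$ and $x_j$ respectively, so the probability that the pair does \emph{not} block is $1-x_ix_j$. Multiplying over all such pairs gives $\prod_{(i,j>i)\notin D_\pi}(1-x_jx_i)$. Pairs in $D_\pi$ are the cyclic neighbors and impose no blocking constraint (a participant cannot block with the partner assigned by $\pi$), so they are correctly excluded from the product. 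Taking the expectation over the independent uniform thresholds turns the conditional probabilities into the stated integral over the unit cube.

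The main obstacle, and the step I would treat most carefully, is justifying the conditional independence that makes the integrand a clean product. One must verify that conditioning on the single threshold $x_i$ renders the various ``$i$ prefers $j$ to $\pi(i)$'' events mutually independent across different competitors $j$, and independent of what happens at every other participant's list. This is plausible because each participant's preference list is drawn independently, and within one list the position of $\pi(i)$ determines, via order statistics of i.i.d.\ uniforms, how many and which competitors outrank it; but the precise argument requires checking that the discrete ranking model limits to this continuous threshold picture exactly, rather than only approximately, and that the $\delta$-function treatment of fixed points emerges correctly as a boundary case. Once the independence is established, assembling the three families of factors is routine.
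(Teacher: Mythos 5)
Your proposal is correct and follows essentially the same route as the paper: both generate the random instance from i.i.d.\ uniform variables $X_{ij}$, condition on the ranks $x_i = X_{i\pi(i)}$, obtain the factor $x_i$ from condition \eqref{eq:stable-partition-1} (the paper does this by integrating a step function $\Theta(x_i-y_i)$ over the predecessor rank $y_i$, which is just your conditional probability written out), the factors $(1-x_ix_j)$ from condition \eqref{eq:stable-partition-2}, and the $\delta(x_i-1)$ factors for fixed points. The conditional independence you flag as the delicate step holds for exactly the reason you suggest---each blocking event and each condition-\eqref{eq:stable-partition-1} event involves distinct members of the independent family $\{X_{ij}\}$, since cyclic-neighbor pairs are excluded from the blocking product---and the continuous model reproduces uniform random rankings exactly, not approximately.
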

\begin{proof}
A random instance of the stable roommates problem can be generated as
follows: Introduce an $n\times (n-1)$ array of independent random variable
$X_{ij}$ $(1 \leq i\neq j\leq n)$, each uniformly distributed in
$[0,1]$. Each agent $i$ ranks the agents $j\neq i$ on his preference
list in increasing order of the variables $X_{ij}$. Obviously, such an
ordering is uniform for every $i$, and the orderings by different
members are independent. The fact that each agent is at the very end
of his hown preference list is taken into account by adding variables
$X_{ii}=1$ to the set of random variables.

Let $P(\pi|x,y)$ denote the conditional probability that the
permutation $\pi$ is stable given $X_{i\pi(i)}=x_i$ and
$X_{i\pi^{-1}(i)}=y_i$, and  let $F_\pi = \{i:i=\pi(i)\}$ and
$M_\pi = \{i: \pi(i)=\pi^{-1}(i)\neq i\}$ denote the fixed
points and two cycles of $\pi$. Then \eqref{eq:stable-partition-1}
 tells us
\begin{equation}
  \label{eq:integrand-1}
  P(\pi|x,y) \propto \prod_{i\not\in\mathcal{M}_\pi\cup\mathcal{F}_\pi}
  \Theta(x_i-y_i)\,\prod_{i \in M_\pi\cup
    F_\pi}\delta(x_i-y_i)
\end{equation}
where $\Theta$ is the step function
\begin{displaymath}
  \Theta(z) = \begin{cases}
     1 & z \geq 0 \\
     0 & z < 0
  \end{cases}
\end{displaymath}
and $\delta(z)$ is the Dirac delta function.

The second condition \eqref{eq:stable-partition-2} is violated if $X_{ij} < x_i$ and
$X_{ji} < x_j$ for some $(i,j)\not\in D_\pi$. This happens with
probability $x_ix_j$, hence
\begin{equation}
  \label{eq:integrand-2}
  P(\pi|x,y) \propto \prod_{(i, j>i)\not\in D_\pi} (1-x_jx_i)\,,
\end{equation}
which does not depend on $y$. 

Integrating \eqref{eq:integrand-1} over $y_i$ gives a factor
$x_i$ if $i$ is an element of cycle of length three or more, a factor
$1$ otherwise.  Adding the product $\prod_{i\in F_\pi} \delta(x_i-1)$
to ensure the constraints $X_{ii}=1$ finally allows us to integrate
over the $x_i$'s to obtain \eqref{eq:PnPi}.
\end{proof}

Note that \eqref{eq:PnPi} differs slightly from the integral
representation in \cite{pittel:93a}: Our integral is valid for any
permutation $\pi$. If $\pi$ contains more
than one fixed point, the integrand vanishes since the
$\delta$-function forces at least one of the factors in the product
$\prod (1-x_ix_j)$ to be zero and $P(\pi)=0$ as it should.

Obviously $P(\pi)$ depends on $\pi$ only through the cycle type of
$\pi$. Let $a_k$ denote the number of cycles of length $k$ in $\pi$.
We use the notation $\mathbf{a}=[1^{a_1}, 2^{a_2},\ldots]$ to
denote the cycle type, including only those terms with
$a_k>0$. For $n=4$, the only
non-zero integrals are
\begin{subequations}
\label{eq:p4-integrals}
\begin{equation}
  \label{eq:Pa-4-02}
  P([2^2]) = \int_0^1 \mathrm{d}^4x \left( 1-x_{{1}}x_{{3}} \right)  \left( 1-x_{{1}}x_{{4}} \right) 
 \left( 1-x_{{2}}x_{{3}} \right)  \left( 1-x_{{2}}x_{{4}} \right) = \frac{233}{648} 
\end{equation}
\begin{equation}
  \label{eq:Pa-4-0001}
  P([4^1]) = \int_0^1 \mathrm{d}^4x \left( 1-x_{{1}}x_{{3}} \right)  \left( 1-x_{{2}}x_{{4}} \right) x_{{1}}x_{{2}}x_{{3}}x_{{4}} = \frac{25}{1296}
\end{equation}
\begin{equation}
  \label{eq:Pa-4-101}
  P([1^1\,3^1]) = \int_0^1 \mathrm{d}^3x\,(1-x_1)(1-x_2)(1-x_3)x_1x_2x_3 = \frac{1}{216}\,.
\end{equation}
\end{subequations}
Note that in the last integral, we have already done the trivial integration over $\delta(x_4-1)$.

\begin{proposition}
  \label{the:Pn-general}
  Let $p_n$ ($n$ even) be the probability that a random instance of the stable roommates
  problem has a solution. Then 
  \begin{equation}
    \label{eq:Pn-general}
    p_n = \sum_{\mathbf{a}\in\mathcal{E}_n} (-1)^{e(\mathbf{a})} c(\mathbf{a}) P(\mathbf{a})\,,
  \end{equation}
  where $\mathcal{E}_n$ is the set of all cycle types of size $n$ with
  even cycles only.  The exponent $e(\mathbf{a})$ is the number of
  even cycles of length $\geq 4$ in $\mathbf{a}$,
  $e(\mathbf{a})=\sum_{k=4,6,\ldots} a_k$. The factor
  $c(\mathbf{a})$ is the number of permutations with cycle type
  $\mathbf{a}$,
  \begin{equation}
    \label{eq:num-cycles}
    c(\mathbf{a}) = \frac{n!}{\prod_k a_k!\, k^{a_k}}\,.
  \end{equation}
\end{proposition}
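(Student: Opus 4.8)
The plan is to prove \eqref{eq:Pn-general} by reducing it, via linearity of expectation, to a purely combinatorial identity holding for each fixed instance, and then to establish that identity using Tan's splitting operation (fact \ref{fact2}).

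First I would rewrite the right-hand side of \eqref{eq:Pn-general} as an expectation. By Proposition \ref{the:PnPi}, $P(\mathbf{a})$ is the probability that one fixed permutation of cycle type $\mathbf{a}$ is stable, so $c(\mathbf{a})\,P(\mathbf{a})=\mathbb{E}\big[\#\{\pi\text{ of type }\mathbf{a}:\pi\text{ stable}\}\big]$. Since $(-1)^{e(\mathbf{a})}$ is constant on each cycle type, linearity gives
\[
\sum_{\mathbf{a}\in\mathcal{E}_n}(-1)^{e(\mathbf{a})}c(\mathbf{a})P(\mathbf{a})=\mathbb{E}\Big[\sum_{\pi\in\mathcal{P}(I)}(-1)^{e(\pi)}\Big],
\]
where $\mathcal{P}(I)$ is the set of stable permutations of the random instance $I$ whose cycles all have even length, and $e(\pi)$ counts the cycles of length $\ge 4$ in $\pi$. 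Since also $p_n=\mathbb{E}\big[\mathbf{1}[I\text{ solvable}]\big]$, the proposition follows once I prove the pointwise identity $\sum_{\pi\in\mathcal{P}(I)}(-1)^{e(\pi)}=\mathbf{1}[I\text{ solvable}]$ for every instance $I$.

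The unsolvable case is immediate: by Tan's characterisation an instance is solvable iff it admits a stable permutation with no odd cycle, and since a fixed point is a $1$-cycle this says exactly $\mathcal{P}(I)\neq\varnothing$. Hence for an unsolvable instance the sum is empty and equals $0$. The substance is the solvable case, where I must show $\sum_{\pi\in\mathcal{P}(I)}(-1)^{e(\pi)}=1$. Here I would invoke fact \ref{fact2}: every even cycle of length $2m\ge 4$ in a stable permutation resolves into $2$-cycles in exactly two ways, each again stable. The idea is to organise $\mathcal{P}(I)$ as a product of independent local choices over a family of ``atomic'' even cycles (the long even cycles of a coarsest member of $\mathcal{P}(I)$, equivalently the rotations of the instance): each atom is either kept whole — contributing a factor $-1$ through $e$ — or resolved in one of its two ways — contributing $+1$ twice and nothing to $e$ — while the $2$-cycles are forced. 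If these choices are independent across atoms, the signed sum factorises,
\[
\sum_{\pi\in\mathcal{P}(I)}(-1)^{e(\pi)}=\prod_{\text{atoms}}\big(\underbrace{-1}_{\text{whole}}+\underbrace{1+1}_{\text{two resolutions}}\big)=1,
\]
with the empty product (unique stable matching, no resolvable cycle) also giving $1$. The mechanism is already visible in example (A), where $\mathcal{P}(I)=\{(1,3,4,2),\,(1,3)(4,2),\,(3,4)(1,2)\}$ has a single atom and signed sum $-1+1+1=1$.

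The hard part will be making this product structure rigorous from Tan's results rather than from the three quoted facts alone. Concretely I must (i) single out the atomic even cycles — e.g.\ exhibit a canonical coarsest permutation in $\mathcal{P}(I)$ and show every member is obtained from it by fully resolving a subset of its long cycles; (ii) rule out stable \emph{partial} resolutions (such as a $6$-cycle decomposing into a stable $4$-cycle plus a $2$-cycle), so that each atom offers only the three states whole, split-$A$, and split-$B$; and (iii) show that resolution choices at distinct atoms are mutually compatible, so that $\mathcal{P}(I)$ is literally the claimed product. Each of these is a statement about the fine structure of stable permutations (Tan's stable-partition theory, or Irving's rotation machinery) and is where the real work lies. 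An alternative that might sidestep the full structure theorem is a sign-reversing involution on $\mathcal{P}(I)$ — toggling the whole/split status of a canonically chosen even cycle so as to flip the parity of $e$, with a single fixed point of sign $+1$ — but proving such a map well defined (that its image stays in $\mathcal{P}(I)$) confronts exactly the same structural difficulties.
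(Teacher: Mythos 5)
Your opening reduction is sound: by symmetry and linearity of expectation, the right-hand side of \eqref{eq:Pn-general} equals $\mathbb{E}\bigl[\sum_{\pi\in\mathcal{P}(I)}(-1)^{e(\pi)}\bigr]$, and the proposition is equivalent to the pointwise identity $\sum_{\pi\in\mathcal{P}(I)}(-1)^{e(\pi)}=\mathbf{1}[I\text{ solvable}]$; this is exactly the identity that the paper's proof establishes in probabilistic language (union bound over matchings, then iterated corrections keyed to fact~\ref{fact2}, i.e.\ inclusion--exclusion). The unsolvable case is also fine. The gap is in the solvable case: the structure you postulate for $\mathcal{P}(I)$ --- a canonical coarsest stable permutation whose long even cycles are independent ``atoms'', each in one of three states, so that the signed sum factorises as $\prod(-1+1+1)$ --- is not merely unproven, it is false. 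Counterexample ($n=6$): take participants $1,2,3$ (``men'') and $4,5,6$ (``women'') with lists (most preferred first, remaining entries arbitrary, self last) $1\colon 4\,5\,6\cdots$, $2\colon 5\,6\,4\cdots$, $3\colon 6\,4\,5\cdots$, $4\colon 2\,3\,1\cdots$, $5\colon 3\,1\,2\cdots$, $6\colon 1\,2\,3\cdots$. This instance has exactly three stable matchings $\mu_1=(1\,4)(2\,5)(3\,6)$, $\mu_2=(1\,5)(2\,6)(3\,4)$, $\mu_3=(1\,6)(2\,4)(3\,5)$, and exactly two stable permutations with a long cycle: $\pi_{12}=(1\,5\,2\,6\,3\,4)$, whose two resolutions under fact~\ref{fact2} are $\mu_1,\mu_2$, and $\pi_{23}=(1\,6\,2\,4\,3\,5)$, with resolutions $\mu_2,\mu_3$. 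The third candidate $6$-cycle $(1\,6\,3\,5\,2\,4)$, which would pair $\mu_1$ with $\mu_3$, satisfies \eqref{eq:stable-partition-1} but violates \eqref{eq:stable-partition-2}: participant $1$ prefers $5$ to its successor $6$, and $5$ prefers $1$ to its successor $2$. So $\mathcal{P}(I)=\{\mu_1,\mu_2,\mu_3,\pi_{12},\pi_{23}\}$. The signed sum is $3-2=1$, as the proposition demands, but no product structure exists: there are \emph{two} incomparable coarsest elements, $\mu_3$ is not a resolution of $\pi_{12}$, and the cardinalities $|S(I)|=3$ and $|\mathcal{P}(I)|=5$ are not powers of $2$ and $3$, which your factorisation would force. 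The same obstruction defeats the sign-reversing involution you sketch as a fallback, since a ``canonically chosen'' toggleable cycle would have to be chosen differently at $\mu_1$ (where only $\pi_{12}$ is available) and at $\mu_3$ (where only $\pi_{23}$ is), and consistently at $\mu_2$ (where both are).

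What actually makes the identity true in this example is a chain of two rotations: the computation $3-2=1$ is a telescoping (Euler-characteristic) sum over the rotation poset of Gusfield--Irving / the stable-partition theory of Tan--Hsueh, not a product over independent cycles. Any rigorous completion of your plan must engage with that poset structure (items (i)--(iii) cannot be repaired as stated), and that is where the entire difficulty of the proposition lives; your verification on example (A), which has a single ``atom'', is too small to detect this. For comparison, the paper's proof never asserts independence or a product decomposition --- indeed it stresses that equality in Boole's inequality fails precisely because stability of different matchings is \emph{not} independent --- and instead iterates the union-bound correction using fact~\ref{fact2} (permutations with one long cycle account for doubly counted stable matchings, those with two long cycles for the over-correction, and so on). That argument is itself presented as a sketch (``iterating this reasoning yields \eqref{eq:Pn-general}''), but it does not rest on the false factorisation on which your solvable case depends.
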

\begin{proof}
  A matching of size $n$ has cycle structure $\mathbf{a}=[2^{n/2}]$,
  and there are $(n-1)!!$ matchings of size $n$. Boole's inequality
  (aka union bound) then tells us that
  \begin{equation}
    \label{eq:boole-1}
    p_n \leq (n-1)!!\, P([2^{n/2}])\,,
  \end{equation}
  where equality holds if and only if the stability of different matchings 
  were independent. This is not true in our case. Fact \ref{fact2}
  from above tells us that stable matchings may come in pairs. Every stable
  permutation that consists of exactly one even length cycle of size $z\geq 4$ and
  $(n-z)/2$ cycles of size $2$ corresponds to two stable matchings. These pairs
  have been counted twice in the sum in \eqref{eq:boole-1}.
  The number of permutations of cycle type $[2^{(n-z)/2}\,z^1]$ is 
  $n!\,\big((n-z)!!\,z\big)^{-1}$ and we get
  \begin{equation}
    \label{eq:boole-2}
    p_n \geq (n-1)!!\, P([2^{n/2}]) - \sum_{z=4,6,\ldots}^n \frac{n!}{(n-z)!!\,z} P([2^{(n-z)/2}\,z^1])\,.
  \end{equation}
  The $\geq$ is again a consequence of Boole's inequality. Equality in
  \eqref{eq:boole-2} would only hold if the stability of pairs of
  permutations were independent events, but we know from fact
  \ref{fact2} that stable pairs again may come in pairs: we have a quartet of
  stable permutation for each permutation that is composed of precisely
  two cycles of length $\geq 4$ and $2$-cycles. Again we can express
  the corrections by $P([])$ and a combinatorial prefactor. Iterating this reasoning
  (which is of course the well known inclusion-exclusion principle) yields
  \eqref{eq:Pn-general}.

  The formula \eqref{eq:num-cycles} for the number of permutations
  of a given cycle type is well known. Yet we will give a short proof
  for completeness. Write down the cycle structure
  in terms of $a_k$ pairs of parentheses enclosing $k$ dots, like
  \begin{equation}
    \label{eq:cycle-structure}
    (\cdot \cdot \cdot) (\cdot \cdot \cdot) (\cdot \cdot) (\cdot)
  \end{equation}
  for $n=9$ and $\mathbf{a}=[1^1,\,2^1,\,3^2]$. Now imagine that the $n$ dots
  are replaced left to right with a permutation of
  $\{1,\ldots,n\}$. Then the parentheses induces the desired cycle
  structure on this permutation. There are $n!$ permutations, but some
  of them result in the same "cycled" permutations. First, a cycle of
  length $k$ can have $k$ different leftmost values in $(\cdots)$, which
  gives a factor $k^{a_k}$ of overcounting. And pairs of parentheses
  that hold the same number of dots can be arranged in any order, which
  gives a factor $a_k!$ of overcounting. This yields \eqref{eq:num-cycles}.
\end{proof}

\begin{corollary}
  \label{the:complement-Pn-general}
  Let $\mathcal{O}_n$ denote the set of all cycle types of size $n$ that contain
  at most one fixed point and at least one odd cycle. Then
  \begin{equation}
     \label{eq:complement-Pn-general}
     1-p_n = \sum_{\mathbf{a}\in\mathcal{O}_n} (-1)^{e(\mathbf{a})} c(\mathbf{a}) P(\mathbf{a})\,,
  \end{equation}
\end{corollary}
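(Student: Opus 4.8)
The plan is to derive \eqref{eq:complement-Pn-general} from Proposition~\ref{the:Pn-general} together with the single extra identity that the signed sum in \eqref{eq:Pn-general}, extended to \emph{all} cycle types, equals $1$. First I would record two bookkeeping facts. A cycle type with two or more fixed points has $P(\mathbf{a})=0$, as observed after Proposition~\ref{the:PnPi}, so adjoining or dropping such types leaves every sum unchanged. Among the types with at most one fixed point, those containing no odd cycle are exactly the even-only types of $\mathcal{E}_n$, while those containing at least one odd cycle are exactly the types of $\mathcal{O}_n$; these two classes are disjoint and together exhaust all relevant types. Granting the identity
\begin{equation*}
  \sum_{\mathbf{a}} (-1)^{e(\mathbf{a})}\, c(\mathbf{a})\, P(\mathbf{a}) = 1,
\end{equation*}
in which $\mathbf{a}$ ranges over all cycle types of size $n$, its left-hand side therefore splits as the $\mathcal{E}_n$-sum plus the $\mathcal{O}_n$-sum. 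The former equals $p_n$ by Proposition~\ref{the:Pn-general}, so the latter equals $1-p_n$, which is exactly \eqref{eq:complement-Pn-general}. Everything thus reduces to this displayed identity.

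To prove it I would read it probabilistically. Since $c(\mathbf{a})\,P(\mathbf{a})$ is the expected number of stable permutations of cycle type $\mathbf{a}$ in a random instance, the left-hand side equals $\mathbb{E}\big[\sum_{\pi}(-1)^{e(\pi)}\big]$, the expected value of the signed count of all stable permutations $\pi$ of the instance. It therefore suffices to prove the per-instance identity $\sum_{\pi\text{ stable}}(-1)^{e(\pi)}=1$, which I would obtain by factoring the signed count over the cycles of a stable permutation. By fact~3 the odd cycles are common to every stable permutation of a given instance; being odd, they never enter $e(\pi)$, so each contributes a factor $1$. By fact~2 every even cycle of length $\geq 4$ may be kept whole, contributing $(-1)^1=-1$ to the sign, or replaced by either of its two matchings into $2$-cycles, each contributing $(-1)^0=+1$, while a $2$-cycle offers only itself. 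If these choices can be made independently across cycles and produce exactly the stable permutations of the instance, then the signed count is the product of the local contributions and equals $1$, because each even cycle of length $\geq 4$ contributes $-1+1+1=1$ and every remaining cycle contributes $1$.

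The crux, and the main obstacle, is precisely this factorization: that distinct even cycles may be resolved independently and that the resulting permutations exhaust \emph{all} stable permutations of the instance---so that, in particular, no long even cycle can split into intermediate even cycles lying outside the two-matching scheme of fact~2. This completeness is the substance of Tan's structure theorem for stable permutations and carries the real weight of the argument; once it is in hand, the local balance $-1+1+1=1$ finishes the per-instance identity. All the remaining steps---the vanishing of $P(\mathbf{a})$ for two or more fixed points, the $\mathcal{E}_n/\mathcal{O}_n$ partition, and the passage from the per-instance identity to its expectation---are routine, so the corollary follows as soon as the structural factorization is established.
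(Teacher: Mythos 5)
Your reduction is sound as far as it goes: types with two or more fixed points contribute nothing by the remark after Proposition~\ref{the:PnPi}, the remaining types split into $\mathcal{E}_n$ and $\mathcal{O}_n$, Proposition~\ref{the:Pn-general} identifies the $\mathcal{E}_n$ part with $p_n$, and the passage from the per-instance statement $\sum_{\pi\ \mathrm{stable}}(-1)^{e(\pi)}=1$ to its expectation is correct, since $c(\mathbf{a})P(\mathbf{a})$ is the expected number of stable permutations of type $\mathbf{a}$. This framing is essentially equivalent to the paper's own route, which instead reads the $\mathcal{O}_n$ sum (extended by the null types) as the probability that some stable permutation contains an odd cycle and then invokes Tan's criterion that this event is exactly unsolvability. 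So everything hinges, as you say, on the per-instance identity --- and that is where the proposal breaks.

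The factorization you use to prove that identity is false, and it is not Tan's theorem: facts 1--3 give existence, the two-way splitting of a single even cycle, and invariance of odd cycles, but nothing about independence or exhaustion. Concretely, take $n=6$ with men $1,2,3$, women $A,B,C$, preferences $1: A>B>C$, $\ 2: B>C>A$, $\ 3: C>A>B$, $\ A: 2>3>1$, $\ B: 3>1>2$, $\ C: 1>2>3$, and same-sex participants followed by oneself at the bottom of every list. This instance has exactly five stable permutations: the matchings $M_1=(1A)(2B)(3C)$, $M_2=(1B)(2C)(3A)$, $M_3=(1C)(2A)(3B)$, and the two $6$-cycles $C_1=(1,B,2,C,3,A)$ and $C_2=(1,C,2,A,3,B)$. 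Here $C_1$ splits into $M_1,M_2$ and $C_2$ splits into $M_2,M_3$, while the third conceivable $6$-cycle, the one formed by the edges of $M_1\cup M_3$, is stable in neither orientation (one orientation violates \eqref{eq:stable-partition-1}, the other is blocked by the pair $(1,B)$). The signed sum is $3-2=1$, as it must be, but it does not arise from any product of local factors: resolving the cycles of $C_1$ independently generates only $\{C_1,M_1,M_2\}$ and misses the stable permutations $M_3$ and $C_2$; starting from $C_2$ instead misses $M_1$ and $C_1$; and no other starting permutation does better. Stable permutations of a given instance have overlapping, non-nested even cycles, and the cancellation that makes $\sum_\pi(-1)^{e(\pi)}=1$ true interleaves matchings and long-cycle permutations along the rotation/poset structure of the instance (an Euler-characteristic--type cancellation, visible above as $+1-1+1-1+1$ along the chain $M_1,C_1,M_2,C_2,M_3$), not a per-cycle balance $-1+1+1=1$. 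Establishing that cancellation is precisely the nontrivial content your proof assumes at its weight-bearing step, so as written the argument has a genuine gap.
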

\begin{proof}
   Since $P(\mathbf{a})=0$ if $\mathbf{a}$ has more than one fixed
   point, we can extend the sum to run over all cycle types with at
   least one odd cycle. Then the right hand side of
   \eqref{eq:complement-Pn-general} is the probability that a random 
   instance of the stable roommates problem has a stable permutation 
   with at least one odd cycle. But this equals the probability that a
   random instance of the stable roommates problem has no solution.
\end{proof}


\section{Evaluation of $p_n$}

We already know the values of the integrals $P(\mathbf{a})$ for $n=4$,
see \eqref{eq:p4-integrals}.
When we insert these values into \eqref{eq:Pn-general} or \eqref{eq:complement-Pn-general} we get
\begin{equation}
  \label{eq:P4}
  \begin{aligned}
  p_4 &= 3\, P([2^2]) - 6\, P([4^1]) = {\frac {26}{27}} =
  0.962962\ldots\,\\
  1-p_4 &= 8\, P([1^1,3^1]) = \frac{1}{27}\,,
  \end{aligned}
\end{equation}
the value computed by Pittel in 1993 \cite{pittel:93a}. It seems
straightforward to compute $p_n$ for larger values of $n$, since all
we need to do is to evaluate and sum the corresponding integrals $P(\mathbf{a})$.
This is not easy, however.  Pittel
wrote ``For $n = 6$, the computations by hand become
considerably lengthier and we gave up after a couple of half-hearted attempts.'' 

The computations become ``lengthier'' for two reasons: the number of
integrals in \eqref{eq:Pn-general} and
\eqref{eq:complement-Pn-general} increase with $n$, and the evaluation
of each individual integral gets harder. Let us first look at the
number of integrals:
\begin{lemma}
  Let $p(n)$ denote the number of unordered partitions of $n$, and let
  $n$ be even. Then
  \begin{subequations}
  \begin{align}
    |\mathcal{E}_n| &= p\left(\frac{n}{2}\right) \label{eq:En}\\
    |\mathcal{O}_n| &= p(n)-p(n-2)-p\left(\frac{n}{2}\right)\label{eq:On}\,.
  \end{align}
  \end{subequations}
\end{lemma}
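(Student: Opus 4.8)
The plan is to translate both quantities into statements about integer partitions and count by elementary bijections. The key observation is that a cycle type of size $n$ is nothing but a partition of $n$: the multiset of cycle lengths sums to $n$, so the cycle types of size $n$ are in bijection with the partitions of $n$, of which there are $p(n)$.

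First I would dispose of $\mathcal{E}_n$. A cycle type with even cycles only corresponds to a partition of $n$ into even parts. Dividing every part by $2$ sends such a partition to an arbitrary partition of $n/2$, and multiplying every part by $2$ is the inverse map. This bijection gives $|\mathcal{E}_n| = p(n/2)$, which is \eqref{eq:En}.

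Next, for $\mathcal{O}_n$ I would count partitions of $n$ subject to the two constraints ``at most one part equal to $1$'' (at most one fixed point) and ``at least one odd part'' (at least one odd cycle). The starting point is to count the partitions with at most one $1$: their complement consists of partitions with at least two $1$'s, and deleting two parts equal to $1$ is a bijection onto the partitions of $n-2$. Hence there are $p(n-2)$ partitions of $n$ with at least two $1$'s, and therefore $p(n)-p(n-2)$ partitions with at most one $1$.

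Finally I would impose the odd-part constraint by subtracting, from the class just counted, the partitions that have no odd part at all. A partition with no odd part is a partition into even parts; in particular it contains no $1$, so it automatically meets the ``at most one $1$'' constraint and sits inside the class of size $p(n)-p(n-2)$. By the bijection of the previous paragraph these all-even partitions number $p(n/2)$, so subtracting yields $|\mathcal{O}_n| = \bigl(p(n)-p(n-2)\bigr) - p(n/2)$, which is \eqref{eq:On}. I do not expect a real obstacle; the one point needing a moment's care is the containment just noted---that every all-even partition we remove genuinely lies among the ``at most one $1$'' partitions---so that the subtraction is legitimate.
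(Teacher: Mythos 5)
Your proof is correct and follows essentially the same route as the paper's: the identification of cycle types with integer partitions, the divide-by-two bijection for $|\mathcal{E}_n|$, and for $|\mathcal{O}_n|$ the count $p(n)-p(n-2)$ of partitions with at most one part equal to $1$ (via deleting two $1$'s) followed by subtracting the $p(n/2)$ all-even partitions. Your explicit remark that every all-even partition contains no $1$'s, and hence lies inside the class being subtracted from, is a point the paper leaves implicit; otherwise the arguments coincide.
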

\begin{proof}
  From $\sum_k k a_k=n$, or from glancing at
  \eqref{eq:cycle-structure}, it is obvious that there is a one-to-one
  correspondence between the set of cycle types of size $n$ and the
  set of integer partitions of $n$. 

  Every cycle type $\mathbf{a}\in\mathcal{E}_n$ corresponds to a
  partition of $n$ into even numbers and vice versa. Every partition
  of $n$ into even numbers corresponds to a unique partition of $n/2$
  and vice versa---simply divide or mutiply all parts of the
  partition by two. This proves \eqref{eq:En}.

  The number of all cycle types is $p(n)$, and the number of all cycle
  types that contain at least two fixed points is $p(n-2)$. Hence the
  number of cycle types that contain at most one fixed point is
  $p(n)-p(n-2)$. For $|\mathcal{O}_n|$ we also need to subtract the number
  of cycle types with even cycles only, which is $p(n/2)$. This proves \eqref{eq:On}.
\end{proof}

There is no closed formula for the partition numbers $p(n)$, but they
are known for all $n \leq 10\,000$ \cite{oeisA000041}. And
we need $p(n)$ only for small values of $n$ to get
\begin{center}
\begin{tabular}{crrrrrrrr}
  $n$ & 4 & 6 & 8 & 10 & 12 & 14 & 16 & 18 \\[1ex]
  $|\mathcal{E}_n|$ & 2 & 3 & 5 & 7 & 11 & 15 & 22 & 30 \\
  $|\mathcal{O}_n|$ & 1 & 3 & 6 & 13 & 24 & 43 & 74 & 124 
\end{tabular}
\end{center}
In this regime of $n$, the number of integrals is no problem. So let
us turn our attention to the individual integrals.

When we expand the product in \eqref{eq:PnPi}, we get a sum of
easy-to-integrate terms of the form $x_1^{b_1}\cdot  x_n^{b_n}$,
but there too many terms to be integrated by hand. 
\begin{lemma}
A full expanion of the integrand in \eqref{eq:PnPi} yields
$2^{f(\mathbf{a})}$ terms, where
\begin{equation}
  \label{eq:factors}
  f(\mathbf{a}) = \frac{1}{2} n (n-3) + a_1 + a_2\,.
\end{equation}
\end{lemma}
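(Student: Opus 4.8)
The plan is to observe that, among the three groups of factors in the integrand of \eqref{eq:PnPi}, only the binomials $(1-x_jx_i)$ contribute to the number of terms in a full expansion. The monomial factor $\prod_{i\notin M_\pi\cup F_\pi}x_i$ and the delta factors $\prod_{i\in F_\pi}\delta(x_i-1)$ each constitute a single multiplicative factor: they multiply every monomial produced by expanding the binomials by the same quantity, and hence leave the term count unchanged. Since each binomial $(1-x_jx_i)$ expands into exactly two monomials, the total number of terms is $2^{N}$, where $N$ is the number of pairs $(i,j>i)$ with $(i,j)\notin D_\pi$. The task therefore reduces to computing $N$ as a function of the cycle type $\mathbf{a}$.

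First I would write $N=\binom{n}{2}-D$, where $D$ is the number of \emph{unordered} pairs $\{i,j\}$ that are cyclic neighbors in $\pi$, that is, the number of unordered pairs belonging to $D_\pi$. Next I would count $D$ cycle by cycle. A fixed point ($k=1$) has no cyclic neighbor and contributes $0$. A $2$-cycle contributes exactly one unordered neighbor pair, because its two directed edges $i\to j$ and $j\to i$ coincide as the single unordered pair $\{i,j\}$. A cycle of length $k\geq 3$ has $k$ directed edges that are pairwise distinct as unordered pairs, and hence contributes exactly $k$. Collecting these contributions gives
\begin{equation*}
  D = a_2 + \sum_{k\geq 3} k\,a_k\,.
\end{equation*}

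To finish, I would use the identity $\sum_k k\,a_k = n$ to rewrite $\sum_{k\geq 3}k\,a_k = n-a_1-2a_2$, so that $D = a_2+(n-a_1-2a_2)=n-a_1-a_2$. Substituting back,
\begin{equation*}
  N = \binom{n}{2}-(n-a_1-a_2) = \frac{n(n-1)}{2}-n+a_1+a_2 = \tfrac{1}{2}n(n-3)+a_1+a_2 = f(\mathbf{a})\,,
\end{equation*}
which is the claimed exponent.

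The only delicate point is the per-cycle neighbor count, and specifically the fact that a $2$-cycle is special: its two directed edges collapse to a single unordered pair, so it contributes $1$ rather than the naive $2$. This is precisely what produces the $+a_2$ correction in $f(\mathbf{a})$, while the generic length-$k$ cycle contributes $k$. A secondary check I would make is that integrating out the fixed-point delta functions, which sets $x_i=1$ for $i\in F_\pi$, does not alter the term count: a factor $(1-x_jx_i)$ with $i$ a fixed point specializes to $(1-x_j)$, still a binomial contributing two terms. Hence the count $2^{f(\mathbf{a})}$ is unaffected by whether one expands before or after performing the trivial delta integrations, in agreement with the $n=4$ integrals in \eqref{eq:p4-integrals}.
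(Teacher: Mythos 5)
Your proof is correct and follows essentially the same route as the paper: both count the binomial factors as the $\binom{n}{2}$ unordered pairs minus those in $D_\pi$, with the key observation that a $2$-cycle removes only one pair while a cycle of length $k\geq 3$ removes $k$, and then simplify via $\sum_k k\,a_k=n$. Your added remarks (that the monomial and delta factors do not affect the term count, and that specializing $x_i=1$ keeps factors binomial) are sound refinements of the same argument, not a different approach.
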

\begin{proof}
If we expand the integrand, each factor in the product
\begin{equation}
  \label{eq:culprit}
   \prod_{\substack{i<j \\ (i,j)\not\in D_\pi}}^n (1-x_i x_j)
\end{equation}
doubles the number of terms. Hence we need to show that
\eqref{eq:factors} is the number of factors in this product.
Think of the $n$ variables $x_i$ as the vertices of a graph $G$.
Each factor $(1-x_ix_j)$ in \eqref{eq:culprit} corresponds to an edge
of $G$. Without the constraint $(i,j)\not\in D_\pi$, $G$ is the
complete graph with $\frac{1}{2}n(n-1)$ edges. Each cycle of length
$k\geq 3$ in $\mathbf{a}$ corresponds to a cycle in $G$ with $k$ edges
that are removed from the complete graph. Each cycle of length $2$
corresponds to an edge that is also removed. This gets us
\begin{displaymath}
  f(\mathbf{a}) = \frac{1}{2}n(n-1)-\sum_{k\geq 3} k a_k - a_2 =
  \frac{1}{2}n(n-1)- \left(\sum_{k} k a_k - 2a_2 -a_1\right) -a_2
\end{displaymath}
and \eqref{eq:factors} follows from $\sum_k k a_k = n$.
\end{proof} 

The maximum number of terms arises for pure matchings, i.e., for
$a_2=n/2$ and $a_1=0$. It reads $2^4$, $2^{12}$, $2^{24}$, $2^{40}$
and $2^{60}$ for $n=4,6,8,10$ and $12$. Hence it is no surprise that
Pittel gave up on the integrals for $n=6$. The integration is better
left to a computer.


We used the computer-algebra system Mathematica \cite{mathematica:10}
for the exact evaluation of the integrals
$P(\mathbf{a})$. Figure~\ref{fig:integration} shows the Mathematica
code that sets up the integrand and performs the integration. The full
Mathematica code is available online \cite{mertens:page:matchings}.

\begin{figure}
  \centering\small
\begin{verbatim}
Integrand[a_] := Module[(* computes integrand corresponding to cycle pattern a *)
  {n,i,j,l,result,cycle},
  If[a[[1]]>1,result=0, (* more than one fixed point *)
    n=Sum[k*a[[k]],{k,1,Length[a]}];
    If[a[[1]]>0, (* take care of fixed point *)
      n=n-1;result=Product[(1-x[i]),{i,1,n}],
      result = 1 (* no fixed point *)
    ];
    result=result*Product[(1-x[i]*x[j]),{i,1,n-1},{j,i+1,n}];
    (* remove 2-cycles from product *)
    result=result/Product[(1-x[2*i-1]*x[2*i]),{i,1,a[[2]]}];
    (* cycles larger than 2 *)
    result=result*Product[x[i],{i,2*a[[2]]+1,n}];
    For[cycle=3,cycle<=Length[a],cycle++,
      l=Sum[i*a[[i]],{i,2,cycle-1}]+1;
      For[i=l,i<=l+cycle*(a[[cycle]]-1),i+=cycle,
        For[j=0,j<cycle,j++,result=result/(1-x[i+j]*x[i+Mod[(j+1),cycle]])]
      ]
    ]
  ];
  result
];

P[a_] := Module[
  {y,n,k},
  n=Sum[k*a[[k]],{k,1,Length[a]}];
  If[a[[1]]>0,n=n-1];
  y = Integrand[a];
  For[k=n,k>=1,k--,y=Integrate[y,{x[k],0,1}]];
  y
];
\end{verbatim}
  \caption{Mathematica code to compute the integrals $P(\mathbf{a})$
    \eqref{eq:PnPi}. The procedure \texttt{Integrand[a]} returns the
    integrand as a function of variables
    \texttt{x[1]},\ldots,\texttt{x[n]} (or \texttt{x[n-1]} if the
    cycle type \texttt{a} contains a fixed point), the procedure
    \texttt{P[a]} evaluates the integral by exactly integrating
    variable by variable.}
  \label{fig:integration}
\end{figure}

Using our Mathematica code, we computed the values of $p_n$ for $n\leq 12$ both from
\eqref{eq:Pn-general} and (as a crosscheck) from \eqref{eq:complement-Pn-general}. 
The results are
\begin{subequations}
  \label{eq:Pvalues}
\begin{align}
  \label{eq:P6}
  p_6 &= {\frac {181431847}{194400000}} = 0.93329139403292181070\ldots\\[1ex]
  \label{eq:P8}
  p_8 &= {\frac {809419574956627}{889426440000000}} = 0.91004667564933981499\ldots\\[1ex]
  \label{eq:P10}
  p_{10} &= \frac{25365465754520943457921774207}{28460490127321448448000000000}
      = 0.89125189485653484085\ldots\\[1ex]
  \label{eq:P12}
  p_{12} &=
  \frac{13544124829485098788469430650439043569062157071}{15469783933925839494793980316271247360000000000}
  \\
   &= 0.87552126696367780620\ldots\,.\nonumber
\end{align}
\end{subequations}
The values of the corresponding individual integrals are listed in
Tables~\ref{tab:n10} and \ref{tab:n12}.

\begin{table}
  \centering
  \begin{tabular}{rrrrrr}
    & \multicolumn{1}{c}{$p_4$} & \multicolumn{1}{c}{$p_6$} & \multicolumn{1}{c}{$p_8$} & \multicolumn{1}{c}{$p_{10}$} & \multicolumn{1}{c}{$p_{12}$} \\[1ex]
\eqref{eq:Pn-general}: & 0.20 sec. & 19.8 sec. & 5 min. & 20 min. & 15.5
days \\[1ex]
\eqref{eq:complement-Pn-general}: & 0.02 sec. & 3.5 sec. & 6 min. & 25
min. &  13.9 days
  \end{tabular}
  \caption{\label{tab:times}Times to compute $p_n$ according
    \eqref{eq:Pn-general} or \eqref{eq:complement-Pn-general}.}
\end{table} 

We ran our Mathematica code on a computer equipped with 2
Intel\textsuperscript{\textregistered}\
Xeon\textsuperscript{\textregistered}\ CPUs E5-1620 with 3.60 GHz
clock rate and 32 GByte of memory. The total computation times are
shown in Table~\ref{tab:times}. Table~\ref{tab:n12} also shows the
times to compute the individual integrals for $n=12$. Some of theses
integrals (marked with a $\star$) could not be computed by the simple
iterative scheme in Figure~\ref{fig:integration} because Mathematica
ran out of memory. In these cases we expanded the integrand in a
polynomial in the variable $x_n$ (or $x_{n-1}$ if there is a fixed
point) and applied interative integration to each coefficient of this
polynomial. This reduces the memory consumption, but it slows down the
computation. With a larger memory (like 64 GByte instead of 32 GByte),
this could have been avoided and $p_{12}$ could
have been computed somewhat faster.

\begin{table}
  \centering
  \begin{tabular}{c@{\hskip 0mm}c@{\hskip 8mm}c@{\hskip 0mm}c}
    $\mathbf{a}$ & $P(\mathbf{a})$ & $\mathbf{a}$ & $P(\mathbf{a})$  \\[1ex]
    $[2^3]$ & $\frac{448035973}{5832000000}$ & $[1^1,2^1,3^1]$ & $\frac{38077}{86400000}$ \\[1ex]
    $[2^1,4^1]$ & $\frac{307841}{144000000}$ & $[1^1,5^1]$ & $\frac{26257}{777600000}$\\[1ex]
    $[6^1]$ & $\frac{2591729}{11664000000}$ & $[3^2]$ & $\frac{1742111}{7776000000}$\\[3ex]
    $[2^4]$ &$\frac {1245959394495647}{107585022182400000}$ & $[1^1,7^1]$ & $\frac {49958102093}{384232222080000000}$  \\[1ex]
    $[2^2,4^1]$ & $\frac {5211637894488503}{26896255545600000000}$ & $[1^1,2^2,3^1]$ & $\frac {441974732789}{12807740736000000}$\\[1ex]
    $[2^1,6^1]$ & $\frac {914248620325799}{53792511091200000000}$ & $[1^1,3^1,4^1]$ & $\frac {1249592153}{9605805552000000}$ \\[1ex]
    $[4^2]$ & $\frac {1493807915753}{1195389135360000000}$ &  $[1^1,2^1,5^1]$ & $\frac {58105985423}{25615481472000000}$ \\[1ex]
    $[8^1]$ & $\frac {622186155317}{498078806400000000}$ & $[2^1,3^2]$ & $\frac {76670733315619}{4482709257600000000}$ \\[1ex]
    & & $[3^1,5^1]$ & $\frac {58105985423}{25615481472000000}$ \\[3ex]
  $[2^5]$ &
  $\frac{433857166916418660757431885203}{322741958043825225400320000000000}$
  & $[1^1,2^3,3^1]$ & $\frac{1882697003227025150390719}{819662115666857715302400000000}$\\[1ex]
  $[2^3,4^1]$ &
  $\frac{4794693488032751578104859937}{322741958043825225400320000000000}$
  & $[1^1,3^3]$ & $\frac{158398327239405983477}{512288822291786072064000000000}$ \\[1ex]
  $[2^2,6^1]$ &
  $\frac{726158117631681830112186713}{645483916087650450800640000000000}$
  & $[1^1,2^1,3^1,4^1]$ & $\frac{2765878679393466620633}{409831057833428857651200000000}$ \\[1ex]
  $[2^1,4^2,]$ &
  $\frac{94089601969271248978571831}{1290967832175300901601280000000000}$
  & $[1^1,2^2,5^1]$ & $\frac{4336602947669955694769}{32786484626674308612096000000}$ \\[1ex]
    $[2^1,8^1]$ & 
    $\frac{18812621042800384360939621}{258193566435060180320256000000000}$
  & $[1^1,4^1,5^1]$ & $\frac{126601947989502609349}{409831057833428857651200000000}$ \\[1ex]
    $[4^1,6^1]$ &
    $\frac{10678226865621944175135083}{2581935664350601803202560000000000}$
  & $[1^1,3^1,6^1]$ & $\frac{633196266619396193087}{2049155289167144288256000000000}$ \\[1ex]
    $[10^1]$ &
    $\frac{42708804188035567140443357}{10327742657402407212810240000000000}$
  & $[1^1,2^1,7^1]$ &
  $\frac{789921304062168675601}{117094587952408245043200000000}$
  \\[1ex]
 & & $[1^1,9^1]$ &
 $\frac{1265995491264426770353}{4098310578334288576512000000000}$\\[1ex]
 & & $[2^2,3^2]$ &
 $\frac{2918990176269285877130918549}{2581935664350601803202560000000000}$\\[1ex]
& & $[2^1,3^1,5^1]$ &
$\frac{18845369089082632479619357}{258193566435060180320256000000000}$\\[1ex]
& & $[3^2,4^1]$ &
$\frac{21410287713579117222366871}{5163871328701203606405120000000000}$
\\[1ex]
& & $[3^1,7^1]$ &
$\frac{610894828667022260751797}{147539180820034388754432000000000}$
\\[1ex]
& & $[5^2]$ & $\frac{8541874436295301342281403}{2065548531480481442562048000000000}$
  \end{tabular}
  \caption{\label{tab:n10}Probabilities $P(\mathbf{a})$ for $n=6,8,10$
    (top to bottom). Cycle
  types with (right) and without (left) odd cycles.}
\end{table}

\begin{table}
  \centering
  \begin{tabular}{ccr}
    $\mathbf{a}$ & $P(\mathbf{a})$ & Time [sec.]  \\[1ex]
      $[2^6]$ & $\frac{325899908494883644126440199857602193757211429627}{2572934463890545624774134806202233860915200000000000}$ & 265\,018\\[1ex]
$[2^4,4^1]$ & $\frac{1209115974791734652605681563324122140963407221}{1225206887566926487987683241048682790912000000000000}$ & 265\,091\makebox[0pt]{\:\:$^\star$}\\[1ex]
$[2^2,4^2]$ & $\frac{16288072152327610053000950409164225186650151}{4288224106484242707956891343670389768192000000000000}$  & 205\,089\makebox[0pt]{\:\:$^\star$}\\[1ex]
$[4^3]$ & $\frac{231703173390597300042186053017177445722753}{25729344638905456247741348062022338609152000000000000}$  & 206\,115\makebox[0pt]{\:\:$^\star$}\\[1ex]
$[2^3,6^1]$ & $\frac{3378294177941932509053172872298924486923764591}{51458689277810912495482696124044677218304000000000000}$ & 49\,493\\[1ex]
$[2^1,4^1,6^1]$ & $\frac{417880592074077264470531487240272595070729}{2144112053242121353978445671835194884096000000000000}$ & 49\,220\\[1ex]
$[6^2]$ & $\frac{1853330912748299530044034784734880537462353}{205834757111243649981930784496178708873216000000000000}$ & 49\,293\\[1ex]
$[2^2,8^1]$ & $\frac{508893194633666952579907861671385135829521}{134007003327632584623652854489699680256000000000000}$ & 47\,303\\[1ex]
$[4^1,8^1]$ & $\frac{4412925241742005785167715449536219676971}{490082755026770595195073296419473116364800000000000}$ & 47\,520\\[1ex]
$[2^1,10^1]$ & $\frac{53484730261191253361608747405394814514357}{274446342814991533309241045994904945164288000000000}$ & 53\,836\\[1ex]
$[12^1]$ &
$\frac{14826641669894164340076941832557808383893}{1646678056889949199855446275969429670985728000000000}$
& 53\,299 \\[3ex]
   $[1^1,2^4,3^1]$ &
   $\frac{122503966894472107602242737308438169403}{920820472257490446118689304539955200000000000}$
   & 92\,605\\[1ex]
  $[1^1,2^1,3^3]$ &
  $\frac{1222543880169122622560877738575059}{93543667022983156431104945223106560000000000}$
  & 42\,996 \\[1ex]
  $[1^1,2^2,3^1,4^1]$  &
  $\frac{193959334006722457965074605079586629657}{618791357357033579791759212650849894400000000000}$
  & 7\,038\\[1ex]
   $[1^1,3^1,4^2]$  &
   $\frac{2855025200767172513735081732106863}{5729549605157718331405177894915276800000000000}$
   & 8\,354\\[1ex] 
   $[1^1,2^3,5^1]$ &
   $\frac{8437895290055710585350317910566101247317}{1237582714714067159583518425301699788800000000000}$
   & 10\,784\\[1ex]
   $[1^1,3^2,5^1]$ &
   $\frac{176230039945164631684723423501203937}{353595061346876331309576692943342796800000000000}$ & 10\,498\\[1ex]
   $[1^1,2^1,4^1,5^1]$ &
   $\frac{1795748861495201189078302039999290739}{137509190523785239953724269477966643200000000000}$ & 9\,092\\[1ex]
   $[1^1,2^1,3^1,6^1]$ &
   $\frac{2694291584800385097759305707384840499}{206263785785677859930586404216949964800000000000}$ & 8\,314\\[1ex]
   $[1^1,5^1,6^1]$ &
   $\frac{2174740904996317920876887889334183}{4365371127739213966784897443744972800000000000}$ & 8\,376\\[1ex]
   $[1^1,2^2,7^1]$ &
   $\frac{1695852720842076492466118915028628181}{5412169306329739764942500402194022400000000000}$ & 8\,059\\[1ex]
   $[1^1,4^1,7^1]$ &
   $\frac{88077935438211707375963113857429259}{176797530673438165654788346471671398400000000000}$ & 8\,082\\[1ex]
   $[1^1,3^1,8^1]$  &
   $\frac{19270992061340957619880212582236803}{38674459834814598736984950790678118400000000000}$ & 8\,098\\[1ex]
   $[1^1,2^1,9^1]$ &
   $\frac{1197147888812853403164542654655246797}{91672793682523493302482846318644428800000000000}$ & 8\,096\\[1ex]
   $[1^1,11^1]$  &
   $\frac{3699232196202777873480674898554033731}{7425496288284402957501110551810198732800000000000}$ & 8\,001\\[1ex]
   $[2^3,3^2]$ & $\frac{161499154693883709213457621140881273357670713}{2450413775133852975975366482097365581824000000000000}$ & 209\,450\\[1ex]
   $[2^2,3^1,5^1]$ &
   $\frac{4348643545825433788892694617856351275828603}{1143526428395798055455171024978770604851200000000000}$ & 56\,207\\[1ex]
   $[2^1,3^2,4^1]$ & $\frac{357327697339220191285941924523831564051}{1829642285433276888728273639966032967761920000000}$ & 212\,763\makebox[0pt]{\:\:$^\star$}\\[1ex]
   $[3^4]$ & $\frac{9836824308843120655187019024812769030613}{1089072788948379100433496214265495814144000000000000}$ & 216\,063\\[1ex]
   $[2^1,3^1,7^1]$ &
   $\frac{91054335946285045516721350625722458874631}{466745480977876757328641234685212491776000000000000}$ & 50\,230\\[1ex]
   $[3^2,6^1]$  &
   $\frac{618748213165565813756023362735829302961763}{68611585703747883327310261498726236291072000000000000}$ & 49\,614\\[1ex]
  $[2^1,5^2]$ &
  $\frac{26742627021755978677974945561880197844453}{137223171407495766654620522997452472582144000000000}$ & 59\,920\\[1ex]
  $[3^1,4^1,5^1]$ &
  $\frac{61829720130534204153121031185033846815523}{6861158570374788332731026149872623629107200000000000}$ & 59\,587\\[1ex]
  $[3^1,9^1]$ &
  $\frac{20608745217756332777239346633128265563123}{2287052856791596110910342049957541209702400000000000}$ & 53\,642\\[1ex]
  $[5^1,7^1]$ &
  $\frac{17650822382212529975478949518938933533441}{1960331020107082380780293185677892465459200000000000}$
  & 53\,495
  \end{tabular}
  \caption{\label{tab:n12}Probabilities $P(\mathbf{a})$ for $n=12$
    and the times to compute them. Times marked with
    $\star$ refer to a slower, more memory efficient integration procedure (see text).}
\end{table}

\section{Odd values of $n$}

For odd values of $n$ there are no stable matchings, of
course. But there are still stable permutations: Tan's results listed 
in Section~\ref{sec:stable-permutations} as well as
Proposition~\ref{the:PnPi} also hold for odd values of $n$.
This allows us to generalize the stable roommates problem to odd
values of $n$. The most obvious generalization is to accept one
fixed point, i.e., to reject one participant from the dormitory (or
put him into a single bedroom), and to ask for a stable matching of
the remaining $n-1$ participants. Let $p_n$ (for $n$ odd) denote the
probability that a random instance admits such a solution. Following
the same reasoning as in Proposotion~\ref{the:Pn-general} and
Corollary~\ref{the:complement-Pn-general}, we get
\begin{align}
    \label{eq:Pn-odd-general}
    p_n &= \sum_{\mathbf{a}\in\mathcal{E}^1_n} (-1)^{e(\mathbf{a})}
    c(\mathbf{a}) P(\mathbf{a})\,, \\
   \label{eq:complement-Pn-odd-general}
    1-p_n &= \sum_{\mathbf{a}\in\mathcal{O}^3_n} (-1)^{e(\mathbf{a})} c(\mathbf{a}) P(\mathbf{a})\,,
\end{align}
where $\mathcal{E}^1_n$ is the set of all cycle types of size $n$
consisting of one fixed point and even cycles and $\mathcal{O}^3_n$
is the set of all cycle types of size $n$ that contain at least one
cycle of odd length $\geq 3$. Table~\ref{tab:n-odd} lists the values of the corresponding integrals
$P(\mathbf{a})$ for odd $n\leq 11$. The resulting values of $p_n$ are
\begin{subequations}
  \label{eq:Pvalues-odd}
\begin{align}
  p_3 &= \frac{3}{4} = 0.75\\[1ex]
  \label{eq:P5}
  p_5 &= \frac{4075}{6912} = 0.5895543981481481\ldots\\[1ex]
  \label{eq:P7}
  p_7 &= \frac{246462083}{518400000} = 0.4754284008487654\ldots\\[1ex]
  \label{eq:P9}
  p_{9} &= \frac{11365049284140796201}{29144725585920000000} = 0.38995218021992023\ldots\\[1ex]
  \label{eq:P11}                                                                                        
  p_{11} &= \frac{176967745750762518431538515329}{546441410444571810201600000000} = 0.3238549318705289\ldots
\end{align}
\end{subequations}
It seems counterintuitive that $p_{2k-1} < p_{2k}$, but note that the
enforced fixed-point for an odd number of participants represents
someone who is happy to be matched with anybody else.
This high destabilizing potential is a result of the rule that every participant
has to put himself at the very end of his preference list.

\section{Conclusions and Outlook}

We have seen that $p_n$, the probabilty of a random instance of the
stable roommmates problem of size $n$ to admit a solution, can be expressed as a sum over
cycle types of permutations of size $n$. Each term in the sum is an
integral with an exponential number of terms. The latter restricts an
exact evaluation of $p_n$ to $n\leq 12$. In spite of this limitation, the
method is far more efficient than the exhaustive enumeration over the
$[(n-1)!]^{n-1}$ different instances of size $n$. For
$n=12$, this number is $4.1\times 10^{83}$, or $4100$ times the number
of atoms in the visible universe (which is usually estimated as
$10^{80}$).

Our results for $n\leq 12$ don't shed new light on the ultimate
behavior of $p_n$ as $n$ becomes large, but they suggest that exact
evaluation of $p_n$ for any larger values of $n$ is likely to be
infeasible without some unexpected new approach.

The approach outlined in this paper can easily be modified to work for
the stable matching problem on general graphs, where each participant
corresponds to a vertex of a graph $G$ and ranks only those
participants adjacent to him in $G$. If $G$ is the complete graph, we
recover the stable roommates problem. In the case of bipartite graphs
$G$ (known as stable marriage problem) we have $p_n=1$. For
non-bipartite graphs, $p_n$ seems to be a monotonically decreasing
function of $n$ that may or may not approach a non-zero value,
depending on the number of short cycles in $G$
\cite{mertens:matchings}.

\begin{table}
  \centering
  \begin{tabular}{c@{\hskip 0mm}c@{\hskip 8mm}c@{\hskip 0mm}c}
    $\mathbf{a}$ & $P(\mathbf{a})$ & $\mathbf{a}$ & $P(\mathbf{a})$
    \\[1ex]
    $[1^1,2^1]$ & $\frac{1}{4}$ &
$[3^1]$ & $\frac{1}{8}$ \\[3ex]
    $[1^1,2^2]$ & $\frac{833}{20736}$ & $[2^1,3^1]$ &
    $\frac{491}{27648}$ \\[1ex]
    $[1^1,4^1]$ & $\frac{1}{2304}$ & $[5^1]$ & $\frac{191}{82944}$
    \\[3ex]
   $[1^1,2^3]$ & $\frac{110831617}{23328000000}$ & $[2^2,3^1]$
& $\frac{5103637}{2592000000}$ \\[1ex]
  $[1^1,2^1,4^1]$ & $\frac{797731}{23328000000}$ & $[2^1,5^1]$ &
$\frac{1945639}{9331200000}$
  \\[1ex]
$[1^1,6^1]$ & $\frac{6541}{2916000000}$ & $[3^1,4^1]$ & $\frac{336349}{18662400000}$\\[1ex]
$[1^1,3^2]$ & $\frac{2183}{972000000}$& $[7^1]$ & $\frac{558779}{31104000000}$\\[3ex]
$[1^1,2^4]$ & $\frac{12242957448855683129}{27541765678694400000000}$ &$[2^3,3^1]$ & $\frac{39406434169244998649}{220334125429555200000000}$
\\[1ex]
$[1^1,2^2,4^1]$ & $\frac{2998148628185909}{1311512651366400000000}$ & $[2^2,5^1]$ & $\frac{234360972607515209}{14688941695303680000000}$\\[1ex]
$[1^1,4^2]$ & $\frac{184134811312313}{27541765678694400000000}$ & $[2^1,3^1,4^1]$ & $\frac{3502136387768779}{2937788339060736000000}$\\[1ex]
$[1^1,2^1,6^1]$ & $\frac{3617070987119831}{27541765678694400000000}$ & $[3^3]$ & $\frac{374799675933251}{4896313898434560000000}$\\[1ex]
$[1^1,8^1]$ & $\frac{26303739761759}{3934537954099200000000}$ &$[2^1,7^1]$ & $\frac{12476274579169301}{10492101210931200000000}$
\\[1ex]
$[1^1,2^1,3^2]$ & $\frac{1206877128048157}{9180588559564800000000}$ & $[3^1,6^1]$ & $\frac{16811008475015879}{220334125429555200000000}$\\[1ex]
$[1^1,3^1,5^1]$ & $\frac{2455964944171}{367223542382592000000}$ & $[4^1,5^1]$ & $\frac{671436255551711}{8813365017182208000000}$\\[1ex]
& & $[9^1]$ & $\frac{1864835590786319}{24481569492172800000000}$\\[3ex]
$[1^1,2^5]$&$\frac{88853486478784120344992170351}{2581935664350601803202560000000000}$&$[2^4,3^1]$&$\frac{710107424563570828306588840739}{51638713287012036064051200000000000}$\\[1ex]
$[1^1,2^3,4^1]$&$\frac{4572509990406797552502341}{34425808858008024042700800000000}$&$[2^3,5^1]$&$\frac{109197089334060411167876570117}{103277426574024072128102400000000000}$\\[1ex]
$[1^1,2^1,4^2]$&$\frac{268054718171660435931803}{860645221450200601067520000000000}$&$[2^2,3^1,4^1]$& $\frac{14352373021321999225705658471}{206554853148048144256204800000000000}$\\[1ex]
$[1^1,2^2,6^1]$&$\frac{1168831786137020235667067}{172129044290040120213504000000000}$&$[2^1,3^3]$& $\frac{183002200715285406357445301}{45901078477344032056934400000000000}$\\[1ex]
$[1^1,4^1,6^1]$&$\frac{743715115011041403407}{57376348096680040071168000000000}$&$[2^2,7^1]$& $\frac{204627732127480795488157591}{2950783616400687775088640000000000}$\\[1ex]
$[1^1,2^1,8^1]$&$\frac{100516822545753167453891}{322741958043825225400320000000000}$&$[2^1,3^1,6^1]$& $\frac{547557978971950371021494551}{137703235432032096170803200000000000}$\\[1ex]
$[1^1,10^1]$&$\frac{66933419040890225282203}{5163871328701203606405120000000000}$&$[2^1,4^1,5^1]$& $\frac{2461017693717356460362113427}{619664559444144432768614400000000000}$\\[1ex]
$[1^1,2^2,3^2]$ & $\frac{4386643900008678909343237}{645483916087650450800640000000000}$&$[3^2,5^1]$& $\frac{41866526759821300816071211}{206554853148048144256204800000000000}$\\[1ex]
$[1^1,3^2,4^1]$&$\frac{531499853646597948383}{40983105783342885765120000000000}$&$[3^1,4^2]$& $\frac{373490614662460067378083}{1844239760250429859430400000000000}$\\[1ex]
$[1^1,2^1,3^1,5^1]$&$\frac{804392338445761377188767}{2581935664350601803202560000000000}$&$[2^1,9^1]$& $\frac{182277891278802756936253723}{45901078477344032056934400000000000}$\\[1ex]
$[1^1,5^2]$&$\frac{16733378688533122315949}{1290967832175300901601280000000000}$&$[3^1,8^1]$& $\frac{62737571161936687651226813}{309832279722072216384307200000000000}$\\[1ex]
$[1^1,3^1,7^1]$&$\frac{19128779897378689455131}{1475391808200343887544320000000000}$&$[4^1,7^1]$& $\frac{17908596195396917111551979}{88523508492020633252659200000000000}$\\[1ex]
$[11^1]$& $\frac{62675660640300931114214381}{309832279722072216384307200000000000}$&$[5^1,6^1]$& $\frac{6963996535691809265274221}{34425808858008024042700800000000000}$
  \end{tabular}
  \caption{\label{tab:n-odd}Probabilities $P(\mathbf{a})$ for $n=3,5,7,9,11$
    (top to bottom).}
\end{table}  

\bibliographystyle{unsrt} 
\bibliography{mertens,matchings,math}

\ACKNO{I am grateful for comments by David Manlove and Rob Irving.}

\end{document}